
\documentclass[10pt,letterpaper]{amsart}

\usepackage{amsthm,amsmath,amssymb,amsfonts}
\usepackage{hyperref}
  \hypersetup{colorlinks=true,linkcolor=blue,citecolor=red,urlcolor=blue}
\usepackage[utf8x]{inputenc}
\usepackage[all]{xy}
\usepackage{enumerate}
\usepackage{stmaryrd}

\newtheorem{theorem}{Theorem}

\newtheorem{lemma}[theorem]{Lemma}

\theoremstyle{definition}
  \newtheorem{definition}[theorem]{Definition}
  \newtheorem{remark}[theorem]{Remark}

\title{An example of a non--algebraizable singularity}
\author{Valente Ram\'{i}rez}
\address{Dept.~of Mathematics, Cornell University, 120 Malott Hall, Ithaca, NY 14850, USA.}
\email{valente@math.cornell.edu}
  \hypersetup{
  pdftitle={An example of a non-algebraizable singularity},
  pdfauthor={Valente Ramirez}
  }


\newcommand{\R}{\mathbb{R}}
\newcommand{\Q}{\mathbb{Q}}
\newcommand{\C}{\mathbb{C}}
\renewcommand{\P}{\mathbb{P}}

\newcommand{\F}{\mathcal{F}}
\newcommand{\GL}[2]{\operatorname{GL}({#1},{#2})}

\begin{document}

\begin{abstract}
 We say that the the germ of a singular holomorphic foliation on $(\C^2,0)$ is algebraizable whenever it is holomorphically conjugate to the singularity of a foliation defined globally on a projective algebraic surface. The object of this work is to construct a concrete example of a non--algebraizable singularity. Previously only existential results were known \cite{GenzmerTeyssier2010}.
\end{abstract}

\subjclass{}
\keywords{}
\thanks{This work was supported by \emph{Laboratorio Internacional Solomon Lefschetz (LAISLA)} associated to CNRS (France) and CONACYT (Mexico) and the grant UNAM--DGAPA--PAPIIT IN 102413 (Mexico).}
\maketitle

\section{Introduction}

Let $\F$ be the germ of a holomorphic foliation on $(\C^2,0)$ with an isolated singularity. We are interested in understanding whether there exists or not an algebraic surface $S$ and a point $p$ on it such that $\F$ is holomorphically conjugate to the germ at $p$ of an algebraic foliation on $S$. Those germs for which such an algebraic foliation exists are called \emph{algebraizable}. The existence of non--algebraizable singularities remained unknown until Genzmer and Teyssier proved in \cite{GenzmerTeyssier2010} the existence of countably many classes of saddle--node singularities which are not algebraizable. Their proof however, does not provide us with any concrete examples of such singularities and, as far as the author knows, no other examples of non--algebraizable singularities are known. Following Casale \cite{Casale2013} we split the problem into two parts: First, give an example of a germ of a non--algebraizable singularity; second, identify algebraizable singularities. In this paper we address the first question and construct explicitly the germ of a degenerate singularity of order two on $(\C^2,0)$ which is not algebraizable.

The strategy we shall follow to construct our example is based on the following observation: Any algebraic singularity depends on finitely many complex parameters and so these parameters necessarily generate a field extension of $\Q$ with finite transcendence degree. In order to build a non--algebraizable singularity we seek to define a foliation whose defining coefficients carry \emph{enough transcendence} in some intrinsic way. In order to formalize the previous sentence we make use of the formal classification of non--dicritic degenerate singularities given by Ortiz--Bobadilla, Rosales--Gonz\'{a}lez and Voronin in \cite{OrtizRosalesVoronin2012}. More precisely, we construct a holomorphic 1--form which is in the formal normal form introduced in \cite{OrtizRosalesVoronin2012} in such a way that its \emph{radial part}, a formal invariant, is defined by a power series with coefficients generating a field of infinite transcendence degree. The non--algebraizability of such foliation follows from the observation that any polynomial 1--form generating a non--dicritic degenerate singularity has a formal normal form as in \cite{OrtizRosalesVoronin2012} whose coefficients generate a field of finite transcendence degree. Our main result is stated as follows.

\begin{theorem}\label{thm:main}
 Let $\lambda_1,\lambda_2,\lambda_3$ be non--rational numbers satisfying $\lambda_1+\lambda_2+\lambda_3=1$ and let $f_j=a_jx+b_jy$, $j=1,2,3$, be different linear forms in $\C[x,y]$. Define the homogeneous quadratic 1--form 
 \[ \omega_0=f_1f_2f_3\sum_{j=1}^{3}\lambda_j\frac{df_j}{f_j}. \]
 Let $\mathcal{B}=\{b_0,b_1,\ldots\}$ be a subset of $\C$ such that the field extension $\Q(\mathcal{B})/\Q$ has infinite transcendence degree and such that the power series
 \[ b(x)=\sum_{k=0}^{\infty}b_k x^k \]
 has a positive radius of convergence. Then the germ of the holomorphic foliation on $(\C^2,0)$ defined by the 1--form
 \begin{equation}\label{eq:example}
 \omega=\omega_0+x^2b(x)(xdy-ydx) 
 \end{equation}
 is not algebraizable.
\end{theorem}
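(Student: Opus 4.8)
The plan is to reduce the statement to a comparison of formal invariants and then exploit a transcendence obstruction. First I would verify that $\omega$ defines a non--dicritic degenerate singularity of order two. The lowest order term is the homogeneous quadratic form $\omega_0$, and the perturbation $x^2b(x)(x\,dy-y\,dx)$ has order at least three, so the singularity has multiplicity two. Contracting $\omega_0$ with the radial vector field $R=x\partial_x+y\partial_y$ and using Euler's identity gives $\omega_0(R)=f_1f_2f_3\sum_j\lambda_j\neq 0$ (here $\sum_j\lambda_j=1$), which is precisely the condition guaranteeing that the exceptional divisor of the blow--up is invariant and non--dicritic. Since algebraizability entails holomorphic, hence formal, conjugacy to the germ of an algebraic foliation, and since the formal normal form of \cite{OrtizRosalesVoronin2012} together with its radial part is a complete invariant of formal conjugacy, it suffices to show that the radial part of $\omega$ cannot coincide with the radial part of any algebraic germ.

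Second, I would compute the radial part of $\omega$ explicitly. Blowing up the origin by $y=tx$ one finds $x\,dy-y\,dx=x^2\,dt$, so the perturbation pulls back to $x^4b(x)\,dt$, while $\omega_0$ pulls back to $x^2\bigl[G(t)\,dx+xH(t)\,dt\bigr]$ with $G=g_1g_2g_3$ and $H$ polynomials in $t$ determined by the $f_j$ and $\lambda_j$, where $g_j(t)=a_j+b_jt$. Thus the strict transform of $\omega$ is $G(t)\,dx+x\bigl(H(t)+xb(x)\bigr)\,dt$, and the series $b(x)$ enters only through the radial direction $dt$. I would then show that this term realizes the radial modulus of \cite{OrtizRosalesVoronin2012}: the normalizing transformation depends triangularly on the coefficients $b_k$, so the $k$--th coefficient of the radial part equals $b_k$ modulo a polynomial in $b_0,\ldots,b_{k-1}$ and the fixed data $a_j,b_j,\lambda_j$. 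Consequently the coefficients of the radial part of $\omega$ generate the field $\Q(\mathcal B)$ enlarged by the finitely many parameters $a_j,b_j,\lambda_j$, hence a field of infinite transcendence degree over $\Q$.

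Third comes the finiteness lemma, which furnishes the contradiction. If $\omega$ were algebraizable it would be holomorphically conjugate to the germ at a point $p$ of an algebraic foliation on a surface $S$. Choosing algebraic local coordinates, this germ is defined by a 1--form whose Taylor coefficients all lie in a single finitely generated extension $k/\Q$ (generated by the coefficients of the equations of $S$ and of the foliation and by the coordinates of $p$), so $k$ has finite transcendence degree. The formal normalization to the shape of \cite{OrtizRosalesVoronin2012} is carried out by solving the homological equations order by order; each step requires only rational operations over $k$ together with the reduced eigendata, which is algebraic over $k$. Therefore the radial part of any algebraic germ has all its coefficients in a field of finite transcendence degree over $\Q$. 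Comparing with the previous paragraph, the radial parts cannot agree, so $\omega$ is not algebraizable.

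The main obstacle is the finiteness lemma together with the precise identification of the radial modulus. For the former one must show that placing an algebraic germ in the normal form of \cite{OrtizRosalesVoronin2012} never manufactures new transcendentals, i.e.\ that the recursive normalization stays inside a fixed finitely generated field; the delicate point is controlling the reduction of the singularity and the choice of the data entering the formal invariant so that no transcendental limit is introduced. For the latter one must trace the term $x^2b(x)$ through the blow--up and the formal reduction and confirm that the dependence of the radial part on the $b_k$ is indeed unitriangular, so that the infinite transcendence degree of $\Q(\mathcal B)$ is faithfully transferred to the formal invariant.
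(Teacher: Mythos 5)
Your overall strategy is the same as the paper's: reduce to the formal normal form of \cite{OrtizRosalesVoronin2012}, prove a finiteness lemma saying that the normalization of an algebraic germ stays inside a finitely generated field, and derive a contradiction from transcendence degrees. Two remarks. First, your middle step --- blowing up, tracing $x^2b(x)$ through the reduction, and proving that the radial part depends unitriangularly on the $b_k$ --- is an unnecessary detour, and it is precisely the point of the construction that it can be skipped: $\omega$ is \emph{already} in the normal form (\ref{eq:FNF}), so its radial part is literally $b(x)$ and no normalization of $\omega$ is needed. The obstacle you flag there simply evaporates. Second, there is a small but genuine gap in your phrase ``the radial part is a complete invariant of formal conjugacy'': by \hyperref[thm:ORV]{Theorem \ref{thm:ORV}} the normal form is unique only up to homotheties and multiplication by a scalar, so two formally equivalent normal forms satisfy $A^{\ast}\hat{\eta}=\lambda\,\omega$ for some $A\in\GL{2}{\C}$ and $\lambda\in\C$ rather than being equal. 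You must therefore argue that this residual action cannot convert a finite-transcendence-degree radial part into an infinite one; this is immediate (adjoining the entries of $A$ and $\lambda$ adds at most five generators), but it has to be said, and it is exactly how the paper closes the argument in (\ref{eq:comparison}). Your sketch of the finiteness lemma (linear homological equations solved over $k$, plus the pre-normalization of the separatrix) matches the paper's \hyperref[lemma:reduction]{Lemma \ref{lemma:reduction}}, and your reduction of the projective-surface case to a germ defined over a finitely generated field is the same idea as the paper's generic linear projection to $\P^2$, though you should make the choice of ``algebraic local coordinates'' explicit as the paper does.
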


\begin{remark}\label{rmk:LindemannWeierstrass}
 In virtue of the Lindemann--Weierstrass theorem if $\{a_1,a_2,\ldots\}$ is a collection of algebraic numbers spanning an infinite--dimensional vector space over $\Q$ then the set $\{e^{a_1},e^{a_2},\ldots\}$ generates a field extension of $\Q$ of infinite transcendence degree. This gives us an immense amount of flexibility defining the set $\mathcal{B}$. In particular we can choose $\mathcal{B}\subset\R$ and we can make make $b(x)$ as rapidly convergent as desired. For example, defining
 \[ b_k=e^{-k\sqrt{k}},\qquad k=0,1,2,\ldots, \]
 gives rise to an entire function $b(x)=\sum_{k=0}^{\infty} b_k x^k$.
\end{remark}

It is worth remarking that, on the other hand, we do have a few criteria for deciding algebraizability. It is known since Poincar\'{e} and Dulac that non--degenerate planar singularities with spectrum on the so--called \emph{Poincar\'{e} domain} are analytically equivalent to foliations given by a polynomial 1--form on $\C^2$. In addition, Casale proved in \cite{Casale2013} that the class of dicritical foliations on $(\C^2,0)$ which are regular after a single blow--up and have a unique leaf tangent to the exceptional divisor are algebraic whenever they admit a meromorphic first integral. More recently Calsamiglia and Sad \cite{CalsamigliaSad2014} generalized this result to the class of all dicritic foliations which are regular after one blow--up process, thus removing the requirement of a single tangency with the exceptional divisor.

\subsection*{Acknowledgements}

This result was obtained during a visit to the \emph{Institut de Recherche Math\'{e}matique de Rennes (IRMAR)}. I wish to thank Frank Loray for suggesting this problem and for all the fruitful conversations that led to this paper. I'm particularly thankful to Laura Ortiz who made this visit possible.

\section{Formal classification of non--dicritic singularities}

We wish to construct a 1--form $\omega$ on $(\C^2,0)$ which cannot be conjugate to a 1--form defined by coefficients contained in a field of finite transcendence degree. Having $\omega$ defined by power series with coefficients generating a field of infinite transcendence degree is not enough since those coefficients will not be preserved under an arbitrary change of coordinates. We are thus led to seek for a foliation carrying enough transcendence not on its defining coefficients but on its \emph{formal invariants}. For the sake of briefness we state the theorem on formal classification (cf.~Theorem 1.1 and Corollary 1.4 in \cite{OrtizRosalesVoronin2012}) only for degenerate singularities of order two. For our own convenience we state their result in terms of differential 1--forms and not vector fields; the adaptation is straight forward.

\begin{theorem}[\cite{OrtizRosalesVoronin2012}]\label{thm:ORV}
 A generic 1--form $\eta$ on $(\C^2,0)$ having a degenerate singularity of order two is formally equivalent to a formal 1--form $\hat{\eta}$ of the form
 \begin{equation}\label{eq:FNF} 
 \hat{\eta}=\eta_0+x^2b(x)(xdy-ydx), 
 \end{equation}
 where $\eta_0$ is the quadratic homogeneous part of $\eta$ and $b(x)\in\C\llbracket x\rrbracket$. Such normal form is unique up to homotheties and multiplication by a scalar factor.
\end{theorem}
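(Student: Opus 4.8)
The plan is to prove the existence of the normal form \eqref{eq:FNF} by the graded homological (Poincar\'{e}--Dulac) method and to obtain uniqueness by analysing the residual gauge freedom. A foliation germ is the equivalence class of its defining $1$--form under the action $\eta\mapsto U\,\Phi^{*}\eta$, where $U=1+\cdots$ ranges over formal units and $\Phi=\mathrm{id}+\cdots$ over formal diffeomorphisms tangent to the identity. Writing $\eta=\eta_0+\sum_{k\ge 3}\eta_k$, with $\eta_k$ the homogeneous summand whose coefficients have degree $k$, I would normalise one degree at a time: supposing $\eta$ already coincides with \eqref{eq:FNF} through degree $k-1$, I look for a homogeneous vector field $v$ of degree $k-1$ and a homogeneous function $u$ of degree $k-2$ whose infinitesimal action deletes $\eta_k$ up to the prescribed radial monomial.

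The infinitesimal action in degree $k$ is the linear homological operator
\[
 L_k\colon \mathrm{Vect}_{k-1}\oplus\mathcal{O}_{k-2}\longrightarrow\Omega^1_k,\qquad (v,u)\longmapsto \mathcal{L}_v\eta_0+u\,\eta_0,
\]
where $\mathcal{L}_v$ is the Lie derivative, and the whole theorem comes down to computing its cokernel. The word \emph{generic} I read as the two conditions that, with $E=x\partial_x+y\partial_y$ the Euler field, the cubic $\iota_E\eta_0$ has three \emph{distinct} roots (so that after one blow--up the exceptional divisor carries exactly three singular points, and $\eta_0$ is linearly equivalent to a logarithmic form $f_1f_2f_3\sum_j\lambda_j\,df_j/f_j$), and that the residues $\lambda_j$ are non--resonant, for which non--rationality is amply sufficient. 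Under these hypotheses I claim $\operatorname{coker}L_k$ is one--dimensional, represented by the radial monomial $x^{k-1}(xdy-ydx)$; this is precisely the degree--$k$ contribution $b_{k-3}\,x^{k-1}(xdy-ydx)$ of \eqref{eq:FNF}, and establishing this single claim is the heart of the proof.

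To compute the cokernel I would first determine $\ker L_k$. A pair lies in the kernel exactly when $\mathcal{L}_v\eta_0=-u\,\eta_0$, that is, when $v$ is an infinitesimal symmetry of the homogeneous foliation $\{\eta_0=0\}$ modulo scaling. Writing $\eta_0=P\,dx+Q\,dy$ and letting $X_0=Q\,\partial_x-P\,\partial_y$ be the dual field, a direct computation gives $\mathcal{L}_{gX_0}\eta_0=c\,g\,\eta_0$ with $c=\partial_xQ-\partial_yP$, so that $(gX_0,-c\,g)$ lies in $\ker L_k$ for every homogeneous $g$ of degree $k-3$; this tangent family has dimension $k-2$. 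The role of the non--resonance of the $\lambda_j$ is to guarantee that there are \emph{no} further symmetries---in particular the Euler multiples $p\,E$ escape the kernel for non--constant $p$---so that $\dim\ker L_k=k-2$ exactly. Since $\dim(\mathrm{Vect}_{k-1}\oplus\mathcal{O}_{k-2})=3k-1$ and $\dim\Omega^1_k=2k+2$, this forces $\operatorname{rank}L_k=2k+1$ and hence $\dim\operatorname{coker}L_k=1$. To identify the representative I would construct a linear functional on $\Omega^1_k$---most naturally the residue along the exceptional divisor at one chosen separatrix after blowing up---that annihilates $\operatorname{Im}L_k$ but not $x^{k-1}(xdy-ydx)$. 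I expect this last identification, together with the verification that non--rationality of the $\lambda_j$ suppresses every extra kernel element, to be the main obstacle.

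Finally I would assemble and conclude. Iterating the normalisation over all $k\ge 3$ fixes the coefficients $b_{k-3}$ successively and realises the normalising data as an infinite composition and product that converges in the $(x,y)$--adic (Krull) topology on $\C\llbracket x,y\rrbracket$, yielding a formal equivalence to \eqref{eq:FNF}. For uniqueness, I would check that the only transformations preserving the shape \eqref{eq:FNF} are the homotheties $(x,y)\mapsto(tx,ty)$ together with multiplication of the form by a scalar, and that these act on the invariant by $b(x)\mapsto c\,t\,b(tx)$. Matching this residual freedom against the construction---in which the kernel indeterminacy at each degree corresponds to reparametrisations flowing along the foliation and therefore leaves $b$ untouched---shows that two normal forms define the same germ if and only if they differ by such a homothety and scalar, which is exactly the asserted uniqueness.
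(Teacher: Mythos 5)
You should first be aware that the paper itself contains no proof of Theorem~\ref{thm:ORV}: it is imported wholesale from \cite{OrtizRosalesVoronin2012}, and the only glimpse of the underlying argument appears inside the proof of Lemma~\ref{lemma:reduction}, which recalls that after a prenormalization rectifying the separatrix tangent to $x=0$, the form is reduced degree by degree via maps $H_k=(x+\alpha_k,y+\beta_k)$ and units $\mathcal{K}_k=1-\delta_k$ whose coefficients solve linear systems. Your homological operator $L_k(v,u)=\mathcal{L}_v\eta_0+u\,\eta_0$ is exactly the infinitesimal version of that scheme (you merely skip the prenormalization, which is harmless since your $v$ has both components free), and your bookkeeping is sound: the dimensions $3k-1$ and $2k+2$ are correct, the identity $\mathcal{L}_{gX_0}\eta_0=c\,g\,\eta_0$ with $c=\partial_xQ-\partial_yP$ is correct, and the degree-$k$ slice of the normal form is indeed $b_{k-3}\,x^{k-1}(xdy-ydx)$. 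So strategically you are on the same road as the cited proof.

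The genuine gap is that the entire theorem is concentrated in the two assertions you explicitly flag but never establish: that under $\lambda_j\notin\Q$ one has $\dim\ker L_k=k-2$ exactly, and that $x^{k-1}(xdy-ydx)$ spans a complement of $\operatorname{Im}L_k$. Rank--nullity applied to an unproven nullity proves nothing, and these claims are not routine: since $X_0$ and the Euler field $E$ frame the plane only off the tangent cone ($\det(X_0,E)=T$), a general $v\in\mathrm{Vect}_{k-1}$ decomposes as $aX_0+bE$ only with coefficients having denominator $T=f_1f_2f_3$; using $\mathcal{L}_{pE}\eta_0=T\,dp+3p\,\eta_0$, both the kernel and cokernel computations reduce to divisibility and residue conditions along the three lines $f_j=0$, and it is precisely there --- not in a soft ``no extra symmetries'' remark --- that the non-rationality of the $\lambda_j$ must be consumed, by ruling out integer resonance relations among them. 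Your proposed residue functional detecting the cokernel is likewise named but never computed. The uniqueness part has the same character: you assert, without verification, that the $(k-2)$-parameter kernel indeterminacy at each stage does not propagate into the later coefficients $b_{k'}$, and you still need the (easy, but absent) observation that the linear part of any self-equivalence must preserve the three-line tangent cone and hence, having three distinct eigendirections in dimension two, be a homothety. As it stands the proposal is a correct and well-organized plan whose decisive computations --- the ones constituting the actual content of \cite{OrtizRosalesVoronin2012} --- remain to be carried out.
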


Let $R(x,y)=x\frac{\partial}{\partial x}+y\frac{\partial}{\partial y}$ be the radial vector field and let $\eta_0$ denote the quadratic homogeneous part of $\eta$ as above. The \emph{tangent cone} of $\eta$ is defined to be the polynomial $\eta_0(R)$ and, by definition, $\eta$ is non--dicritic if $\eta_0(R)\not\equiv0$.

\begin{definition}\label{def:genericity}
 In the above theorem and throughout this text we will say that a 1--form $\eta$ having a degenerate singularity of order two is \emph{generic} if it satisfies
  \begin{enumerate}[(i)]
   \item The tangent cone $T=\eta_0(R)$ is non--zero and has three simple linear factors $l_1,l_2,l_3$.
   \item The residues $\alpha_1,\alpha_2,\alpha_3$ of the meromorphic 1--form 
    \[ \frac{\eta_0}{T}=\sum_{j=1}^{3}\alpha_j\frac{dl_j}{l_j} \]
   are not rational numbers.
  \end{enumerate}
\end{definition}

\begin{remark}\label{rmk:FixTangentCone}
 A generic tangent cone as above decomposes into three linear factors which, by a linear change of coordinates, can be normalized to be $y$, $x$ and $x-y$. From now on we will assume that all foliations under consideration have $T(x,y)=xy(x-y)$ as tangent cone.
\end{remark}

\section{Proof of the Theorem}

We shall prove first that the 1--form defined in \hyperref[thm:main]{Theorem \ref{thm:main}} is not equivalent to a polynomial 1--form on $\C^2$. Once we do this it will follow easily that such a singularity cannot be conjugate to an algebraic singularity on a projective surface.

\begin{lemma}\label{lemma:reduction}
 Let $\mathbb{K}$ be a subfield of $\C$ and let $P,Q\in\mathbb{K}\llbracket x,y\rrbracket$ be formal power series. Assume $\eta=Pdx+Qdy$ defines a generic singularity of order two. The formal reduction taking $\eta$ to its formal normal form \textnormal{(\ref{eq:FNF})} is given by a formal map defined over the field $\mathbb{K}$.
\end{lemma}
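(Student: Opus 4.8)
The plan is to read the reduction as an instance of \emph{Galois descent}. Write the formal reduction as a formal diffeomorphism $H$ together with its normal form $\hat\eta$, so that $H^{*}\eta$ equals $\hat\eta$ up to a scalar factor, where $\hat\eta$ has the shape (\ref{eq:FNF}) of \hyperref[thm:ORV]{Theorem \ref{thm:ORV}}. By that theorem such a pair exists and is unique up to homotheties and an overall scalar. The first step is to kill this gauge freedom by imposing normalizing conditions that can be prescribed over $\Q$: in the coordinates of \hyperref[rmk:FixTangentCone]{Remark \ref{rmk:FixTangentCone}} the tangent cone is $T=xy(x-y)$, so the residues $\alpha_1,\alpha_2,\alpha_3$ depend $\Q$--linearly on the coefficients of $\eta_0$ and hence lie in $\mathbb{K}$; one then fixes the linear part of $H$ and the leading coefficient of $b$ by conditions defined over $\Q$. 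After this rigidification both $H$ and $\hat\eta$ become genuinely unique.

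With uniqueness in hand, let $\sigma\in\Aut{\C/\mathbb{K}}$ act on $\C\llbracket x,y\rrbracket$ coefficientwise, fixing $x$ and $y$. This action is a ring automorphism commuting with $d$, with multiplication, and with pullback by formal maps, so $\sigma(H^{*}\eta)=\sigma(H)^{*}\sigma(\eta)$. Since $P,Q\in\mathbb{K}\llbracket x,y\rrbracket$ we have $\sigma(\eta)=\eta$, and since $\eta_0$ is defined over $\mathbb{K}$ the form $\sigma(\hat\eta)$ is again of the normal shape (\ref{eq:FNF}) with the same $\eta_0$ and with $b$ replaced by its $\sigma$--transform. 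Thus $\sigma(H)$ normalizes $\eta$ and $\sigma(\hat\eta)$ is a normal form for $\eta$; by the uniqueness established above, $\sigma(H)=H$ and $\sigma(\hat\eta)=\hat\eta$ for every $\sigma$. Because $\mathbb{K}$ has characteristic zero and $\C$ is algebraically closed, the fixed field of $\Aut{\C/\mathbb{K}}$ is exactly $\mathbb{K}$ (every element of $\C\setminus\mathbb{K}$, transcendental or algebraic over $\mathbb{K}$, is moved by some $\mathbb{K}$--automorphism of $\C$). Therefore all coefficients of $H$ and of $b$ are fixed by every $\sigma$ and hence lie in $\mathbb{K}$, which is the assertion.

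I expect the main obstacle to be the rigidification step, namely exhibiting normalizing conditions imposable over $\Q$ that simultaneously remove the homothety and scalar ambiguity of $\hat\eta$ and the residual symmetries of $H$; this is precisely where the genericity hypothesis (three distinct tangent directions and \emph{non--rational} residues) is needed to force the symmetry group of $\eta_0$ to be small. As a more self--contained alternative that handles this point directly, one can run the reduction of \cite{OrtizRosalesVoronin2012} order by order: at each homogeneous degree the unknown jet of $H$ and the new coefficient of $b$ satisfy a linear homological equation whose matrix is assembled from $\eta_0$ (coefficients and residues in $\mathbb{K}$) and the already--determined lower--order data. Non--rationality of the residues guarantees unique solvability, and a consistent $\mathbb{K}$--linear system with a unique solution over $\C$ has that solution in $\mathbb{K}$; induction on the degree then yields the Lemma without any separate descent argument.
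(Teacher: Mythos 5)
Your second, ``self--contained alternative'' is essentially the paper's own proof. The paper runs the reduction of \cite{OrtizRosalesVoronin2012} degree by degree --- after first rectifying the separatrix tangent to $x=0$ by a sequence of maps $(x,y)\mapsto(x+c_k y^k,y)$, each $c_k$ solving a linear equation with coefficients in $\mathbb{K}$ --- and observes that at every step the unknown coefficients of the normalizing jets and of $b$ solve a linear system with entries in $\mathbb{K}$, hence lie in $\mathbb{K}$. Your added remark that one needs the system to be consistent and (for a canonical choice) uniquely solvable, so that the $\C$--solution actually lies in $\mathbb{K}$, is if anything slightly more careful than the paper's ``evidently defined over $\mathbb{K}$''; consistency over $\C$ implies consistency over $\mathbb{K}$ by invariance of rank under field extension.

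Your primary route, Galois descent, is a genuinely different and appealing idea, but as written it has a gap exactly where you predicted: the rigidification. Theorem \ref{thm:ORV} asserts uniqueness of the \emph{normal form} up to homotheties and scalars; it does not assert uniqueness of the \emph{normalizing map} $H$, which a priori is determined only up to composition with a formal self--equivalence of $\hat\eta$. To get $\sigma(H)=H$ you must show that after your normalization the formal symmetry group of $\hat\eta$ is trivial, and this is asserted (``both $H$ and $\hat\eta$ become genuinely unique'') rather than proved; it is also unclear how to rigidify when $b\equiv 0$. The weaker conclusion $\sigma(\hat\eta)=\hat\eta$ --- which is all the main theorem actually uses --- is easier to salvage, e.g.\ by normalizing the first nonzero coefficient of $b$, and your appeal to the fact that the fixed field of $\operatorname{Aut}(\C/\mathbb{K})$ is $\mathbb{K}$ is correct in characteristic zero. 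But the lemma claims the conjugating map itself is defined over $\mathbb{K}$, so you must either complete the rigidification of $H$ or fall back on the order--by--order argument, which produces the map directly and is the route the paper takes.
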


\begin{proof}
 This proposition follows almost immediately from the proof of the formal classification theorem provided in \cite{OrtizRosalesVoronin2012} where a \emph{pre--normalized} foliation (i.e.~a foliation whose separatrix tangent to the line $x=0$ has been rectified) is reduced to its formal normal form.

 Let us first show that, given the 1--form $\eta=Pdx+Qdy$ as above, we can rectify the separatrix tangent to $x=0$ by a formal change of coordinates defined over $\mathbb{K}$. We proceed recursively assuming the separatrix tangent to $x=0$ has been rectified up to jets of order $k$ (i.e.~$\eta\wedge dx\vert_{x=0}=O(y^{k+1})$) and define a formal change of coordinates $\phi_k$ that will rectify the separatrix up to jets of order $k+1$. In fact, it is enough to define a polynomial change of coordinates of the form
 \begin{equation}\label{eq:rectify}
  \phi_k(x,y)=(x+c_k\,y^k,y).
 \end{equation}
 A short computation shows that if the separatrix tangent to $x=0$ is indeed rectified up to jets of order $k$ then the above polynomial change of coordinates rectifies the separatrix up to jets of order $k+1$ for a suitable coefficient $c_k\in\C$. The condition that $\phi_k^\ast\eta$ has a straight separatrix up to jets of order $k+1$ is given by the equation
 \[ (\phi_k^\ast\, \eta)\wedge dx\vert_{x=0}=O(y^{k+2}), \]
 which reduces to a linear equation on $c_k$ whose coefficients belong to the field $\mathbb{K}$. In particular $c_k\in\mathbb{K}$ and the map $\phi_k$ is defined over $\mathbb{K}$. In this way we can fully rectify the separatrix by a sequence of maps of the form (\ref{eq:rectify}). To be more precise, any finite jet of the sequence of polynomial maps
  \[ \Phi_N=\phi_N\circ\ldots\circ\phi_2, \quad N=2,3,\ldots, \]
 eventually stabilizes and thus we obtain a well defined formal map $\Phi=\lim_{N\to\infty}\Phi_N$ whose Taylor coefficients belong to the field $\mathbb{K}$.

 Because of the above paragraph we may assume without loss of generality that the 1--form $\eta$ in \hyperref[lemma:reduction]{Lemma \ref{lemma:reduction}} has a straight separatrix given by $x=0$. We can thus proceed with the formal reduction process given in \cite{OrtizRosalesVoronin2012}. In the aforementioned paper the form $\eta$ is brought to its formal normal form by a sequence of maps $H_k(x,y)$ followed by multiplication by functions $\mathcal{K}_k(x,y)$ of the form
 \[ H_k(x,y)=(x+\alpha_k(x,y)\, , \,y+\beta_k(x,y)), \qquad \mathcal{K}_k(x,y)=1-\delta_k(x,y), \]
 where $\alpha_k,\beta_k$ are homogeneous polynomials of degree $k$ and $\delta$ is a homogeneous polynomial of degree $k-1$. The coefficients of the polynomials $\alpha_k,\beta_k,\delta_k$ are obtained by solving a linear system of equations which are evidently defined over $\mathbb{K}$. This shows that the formal reduction process obtained in \cite{OrtizRosalesVoronin2012} is given by a formal map with coefficients in $\mathbb{K}$.
\end{proof}

\begin{proof}[Proof of Theorem 1]
 Let us prove prove \hyperref[thm:main]{Theorem \ref{thm:main}} by contradiction. Let $\eta=Pdx+Qdy$ be a polynomial 1--form on $\C^2$ and let $\mathbb{K}$ be the field generated by the coefficients of $P,Q\in\C[x,y]$ which necessarily has finite transcendence degree over $\Q$. Suppose $\eta$ is locally holomorphically equivalent to the 1--form $\omega$ given in (\ref{eq:example}) in a neighborhood of the origin. By \hyperref[lemma:reduction]{Lemma \ref{lemma:reduction}} there exists a formal normal form $\hat{\eta}$ of $\eta$ defined over the field $\mathbb{K}$. Since $\omega$ and $\hat{\eta}$ are formally equivalent and are in their formal normal form, \hyperref[thm:ORV]{Theorem \ref{thm:ORV}} implies that $\hat{\eta}$ and $\omega$ differ at most by a linear change of coordinates followed by multiplication by a scalar. Namely, there exits a linear map $A\in\GL{2}{\C}$ and a complex number $\lambda\in\C$ such that
 \begin{equation}\label{eq:comparison}
  A^{\ast}\hat{\eta}=\lambda\,\omega.
 \end{equation}
 This, however, is impossible since the left hand side of (\ref{eq:comparison}) is given by a power series with coefficients over a field of finite transcendence degree and the right hand side of (\ref{eq:comparison}) is given by a power series whose coefficients generate a field of infinite transcendence degree. We conclude that the 1--form $\omega$ cannot be conjugate to a polynomial 1--form on $\C^2$.

 Suppose now that there exists a foliation $\F$ on a smooth algebraic surface $S\subset \P^N$ such that the germ of the singularity defined by $\omega$ is holomorphically conjugate to the germ of $\F$ at a point $p\in S$. Since $p$ is a smooth point of $S$ we can find a general linear projection $f\colon\P^N\to\P^2$ such that the restriction $f\colon S\to\P^2$ is a branched covering map and $p$  a regular point of $f$. We use the germ of the biholomorphism $f\colon(S,p)\to(\P^2,f(p))$ to define the germ of a singularity
 \[ \widetilde{\F}=(f^{-1}\vert_{(\P^2,f(p))})^{\ast}\F, \]
 around $f(p)$ given by a (not necessarily polynomial) 1--form $\eta$. Without loss of generality, we may consider $\eta$ to be a 1--form on $(\C^2,0)$. Note that the map $f$, the foliation $\F$ and the surface $S$ are all defined by finitely many rational functions in $\C(\P^N)$. These rational functions are therefore defined over a field $\mathbb{K}$ of finite transcendence degree. Note that if $f\colon S\to\P^2$ is defined over the field $\mathbb{K}$ then the germ $f^{-1}\colon(\P^2,f(p))\to(S,p)$, which is guaranteed to exist by the inverse function theorem, is also defined over $\mathbb{K}$ since the inverse function theorem does not enlarge the field of definition $\mathbb{K}$. Neither does pulling back $\F$ by the map $f^{-1}$ will enlarge $\mathbb{K}$. This implies that $\eta$ is defined over a subfield $\mathbb{K}$ of $\C$ of finite transcendence degree over $\Q$ and is holomorphically equivalent to $\omega$, a contradiction.
\end{proof}

\bibliographystyle{alpha}
\bibliography{ref}

\end{document}